\newcommand{\numberset}{\mathbb} 
\newcommand{\R}{\numberset{R}}
\newcommand{\Uk}{\bigcup_{k=1}^n B(a_k,\varepsilon)}
\newcommand{\Ude}{\bigcup_{k=1}^n B(a_k,\varepsilon+\delta)}
\numberwithin{equation}{section}
\newtheorem{thm}{\indent\bf {Theorem}}[section]
\theoremstyle{definition}
\begin{document}

\def\author@andify{%
   \nxandlist {\unskip ,\penalty-1 \space\ignorespaces}%
     {\unskip {} }%
     {\unskip ,\penalty-2 \space }%
}
\title[Multipolar Hardy inequalities]{Multipolar Hardy inequalities and mutual interaction of the poles}

\author[A. Canale]{Anna Canale}
\address{Dipartimento di Matematica, 
Università degli Studi di Salerno, Via Giovanni Paolo II, 132, 84084 Fisciano
(Sa), Italy.
}

\thanks{{\it Key words and phrases}. Weight functions, Multipolar Hardy inequalities, 
Kolmogorov operators, Singular potentials.\\
The author is member of the Gruppo Nazionale per l'Analisi Matematica, 
la Probabilit\'a e le loro Applicazioni 
(GNAMPA) of the Istituto Nazionale di Alta Matematica (INdAM)}



\begin{abstract}

In this paper we state the weighted Hardy inequality
\begin{equation*}
c\int_{{\mathbb R}^N}\sum_{i=1}^n \frac{\varphi^2 }{|x-a_i|^2}\, \mu(x)dx\le  
\int_{{\mathbb R}^N} |\nabla\varphi|^2 \, \mu(x)dx 
+k \int_{\R^N}\varphi^2 \, \mu(x)dx
\end{equation*}
for any $ \varphi$ in a weighted Sobolev spaces, with $c\in]0,c_o[$ where $c_o=c_o(N,\mu)$ is the optimal constant, $a_1,\dots,a_n\in \R^N$, $k$ is a constant depending on $\mu$. 

We show the relation between $c$ and the closeness to the single pole. To this aim we analyze in detail the difficulties to be overcome to get the inequality.

\keywords{Weight functions \and Multipolar Hardy inequalities  
\and Kolmogorov operators
\and Singular potentials }
\subjclass{35K15 \and 35K65 \and 35B25 
\and 34G10 \and 47D03}
\end{abstract}

\maketitle

\bigskip

\section{Introduction}
\label{sec:1}

The paper is devoted to multipolar Hardy inequalities with weight in $\R^N$, $N\ge 3$, with a class of weight functions wide enough. The main difficulties to get the inequalities in the multipolar case rely on the mutual interaction among the poles.

The interest in weighted Hardy inequalities is due to the applications to the study of Kolmogorov operators
\begin{equation}\label{L Kolmogorov}
Lu=\Delta u+\frac{\nabla \mu}{\mu}\cdot\nabla u,
\end{equation}
defined on smooth functions, $\mu>0$ is a probability density on $\R^N$, perturbed by inverse square potentials of multipolar type and of the related evolution problems
$$
(P)\quad\left\{\begin{array}{ll}
\partial_tu(x,t)=Lu(x,t)+V(x)u(x,t),\quad \,x\in {\mathbb R}^N, t>0,\\
u(\cdot ,0)=u_0\geq 0\in L^2(\R^N, \mu(x)dx).
\end{array}
\right. $$
In the case of a single pole and of the Lebesgue measure there is a very huge literature on this topic. For the classical Hardy inequality we refer, for example,  to \cite{Hardy20,Hardy25,HLP,D,KMP,KO}. 

We focus our attention on multipolar Hardy's inequalities.

When $L$ is the Schr\"odinger operator with multipolar inverse square potentials we can find some reference result in literature.

In particular, for the operator
$$
\mathcal{L}=-\Delta-\sum_{i=1}^n\frac{c_i}{|x-a_i|^2},
$$
$n\ge2$, $c_i\in \R$, for any $i\in \{1,\dots, n\}$, V. Felli, E. M. Marchini and S. Terracini in
\cite{FelliMarchiniTerracini} proved that the associated quadratic form
$$
Q(\varphi):=\int_{\R^N}|\nabla \varphi |^2\,dx
-\sum_{i=1}^n c_i\int_{{\mathbb R}^N}\frac{\varphi^2}{|x-a_i|^2}\,dx
$$
is positive if $\sum_{i=1}^nc_i^+<\frac{(N-2)^2}{4}$, $c_i^+=\max\{c_i,0\}$, 
conversely if
$\sum_{i=1}^nc_i^+>\frac{(N-2)^2}{4}$ there exists a configuration of poles such that $Q$ is not positive.
Later R. Bosi, J. Dolbeaut and M. J. Esteban in \cite{BDE} proved that 
for any $c\in\left(0,\frac{(N-2)^2}{4}\right]$ there exists 
a positive constant $K$ such that the multipolar Hardy inequality
\begin{equation*}
	c\int_{{\R}^N}\sum_{i=1}^n\frac{\varphi^2 }{|x-a_i|^2}\, dx\le 
	\int_{{\R}^N} |\nabla\varphi|^2 \, dx\\
	+ K \int_{\R^N}\varphi^2 \, dx 
\end{equation*}
holds for any $\varphi \in H^1(\R^N)$.
C. Cazacu and E. Zuazua in \cite{CazacuZuazua}, improving a result stated in 
\cite{BDE}, obtained the inequality 

$$
\frac{(N-2)^2}{n^2}\sum_{\substack{i,j=1\\ i< j}}^{n}
	\int_{\R^N}\frac{|a_i-a_j|^2}{|x-a_i|^2|x-a_j|^2}\varphi^2\,dx
	\le\int_{\R^N}|\nabla \varphi|^2\,dx,
$$
for any $\varphi\in H^1(\R^N)$ with $\frac{(N-2)^2}{n^2}$ optimal constant (see also \cite{Cazacu} for estimates in bounded domains).
 
For Ornstein-Uhlenbeck type operators 
$$
Lu=\Delta u - \sum_{i=1}^{n}A(x-a_i)\cdot \nabla u,
$$
perturbed by multipolar
inverse square potentials 
\begin{equation*}
V(x)=\sum_{i=1}^n \frac{c}{|x-a_i|^2},\quad c>0, \quad a_1\dots,a_n\in \R^N,
\end{equation*}
weighted multipolar Hardy inequalities with optimal constant
and related existence and nonexistence of solutions to the problem (P)
were stated in \cite{CP} following Cabr\'e-Martel's approach in \cite{CabreMartel}, 
with $A$ a positive definite real Hermitian $N\times N$ matrix, $a_i\in \R^N$, $i\in \{1,\dots , n\}$. 
In such a case, the invariant measure for these operators is the Gaussian measure
$\mu_A (x) dx =\kappa e^{-\frac{1}{2}\sum_{i=1}^{n}\left\langle A(x-a_i), x-a_i\right\rangle }dx$,
with a normalization constant $\kappa$. The technique used to get the inequality applies to the Gaussian functions and it allows to get the result in a simple way. More delicate issue is to prove the optimality of the constant.

In \cite{CPT2} these results have been extended to Kolmogorov operators with a more general drift term which force us to use different methods. 

The result stated in \cite{CazacuZuazua} has been extended to the weighted multipolar case in \cite {CA multipolar Gold}.


In this paper we improve a result in \cite{CPT2}. In particular we state that it holds

\begin{equation}\label{ineq intro}
c\int_{{\mathbb R}^N}\sum_{i=1}^n \frac{\varphi^2 }{|x-a_i|^2}\, \mu(x)dx\le  
\int_{{\mathbb R}^N} |\nabla\varphi|^2 \, \mu(x)dx 
+k \int_{\R^N}\varphi^2 \, \mu(x)dx
\end{equation}
for any $ \varphi \in H^1_\mu$, with $c\in]0,c_o[$ where $c_o=c_o(N,\mu)$ is the optimal constant, showing the relation between $c$ and the closeness to the single pole and improving the constant $k$ in the estimate. The proof initially uses {\sl the vector field method} (see \cite{M}) extended to the weighted case. Then we overcome the difficulties related to the mutual interaction between the poles  
emphasizing this relation. 

The class of weight functions satisfy conditions of quite general type, in particular integrability conditions to get a density result which allows us to state inequality (\ref{ineq intro}) for any function in the weighted Sobolev space. Weights of this type were considered in 
\cite{CPT1,CAHardytype,CAimproved,CAimproved2} in the case of a single pole.

Until now,  we can achieve the optimal constant on the left-hand side in (\ref{ineq intro}) using the IMS truncation method \cite{Morgan,SimonIMS} (see \cite{BDE} in the case of Lebesgue measure and \cite{CPT2} in the weighted case). As a counterpart, the estimate is not very good when the constant $c$ is close to the constant $\frac{c_o(N,\mu)}{n}$ as observed in \cite{BDE} in the unweighted case.

The paper is organized as follows. In Section \ref{sec:2} we consider the weight functions with an example. In Section \ref{sec:3} we show a preliminar result introducing suitable estimates useful to state the main result in Section \ref{sec:4}.

\bigskip

\section{Weight functions}
\label{sec:2}

Let $\mu\ge 0$ be a weight function on $\R^N$. We define the weighted Sobolev space 
$H^1_\mu=H^1(\R^N, \mu(x)dx)$
as the space of functions in $L^2_\mu:=L^2(\R^N, \mu(x)dx)$ whose weak derivatives belong to
$L_\mu^2$.

In the proof of weighted estimates we make us of vector field method introduced in \cite {M} 
in the case of a single pole and extended to the multipolar case in \cite {CPT2}. 
To this aim we define the vector value function
$$
F(x)=\sum_{i=1}^n \beta\, \frac{x-a_i}{|x-a_i|^2} \mu(x), \qquad \beta>0.
$$
The class of weight functions $\mu$ that we consider fulfills the conditions:

\begin{itemize}
\item[$H_1)$] 
\begin{itemize}
\item[$i)$] $\quad \sqrt{\mu}\in H^1_{loc}(\R^N)$;
\item[$ii)$]  $\quad \mu^{-1}\in L_{loc}^1(\R^N)$;
\end{itemize}
\item [$H_2)$] there exists constants $C_\mu, K_\mu\in \R$, $K_\mu>2-N$, such that 
it holds
\begin{equation*}
-\beta\sum_{i=1}^n\frac{(x-a_i)}{|x-a_i|^2}\cdot\frac{\nabla\mu}{\mu}\le
C_\mu+ K_\mu\sum_{i=1}^n\frac{\beta}{|x-a_i|^2}.
\end{equation*}
\end{itemize}
Under the hypotheses $i)$ and $ii)$ in $H_1)$ the 
space $C_c^{\infty}(\R^N)$ is dense in $H_{\mu}^1$ (see e.g. \cite{T}).  
So we can regard
$H_{\mu}^1$ as the completion of $C_c^{\infty}(\R^N)$ with respect to the Sobolev norm
$$
\|\cdot\|_{H^1_\mu}^2 := \|\cdot\|_{L^2_\mu}^2 + \|\nabla \cdot\|_{L^2_\mu}^2.
$$
The density result allows us to get the weighted inequalities for any function in $H^1_\mu$. 
As a consequence of the assumptions on $\mu$,
we get $F_j$, $\frac{\partial F_j}{\partial x_j}\in L_{loc}^1(\R^N)$, where 
$F_j(x)=\beta \sum_{i=1}^{n}\frac{(x-a_i)_j}{|x-a_i|^2}\mu(x)$.
This allows us to integrate by parts in the proof of the Teorem \ref{wHi} in Section 
\ref{sec:3}. 

An example of weight function satisfying $H_2)$ is 

$$
\mu(x)=\prod_{j=1}^n\mu_j (x)=
e^{-\delta\sum_{j=1}^{n}|x-a_j|^2}, \qquad \delta\ge0.
$$
Let us see it in detail without worrying about the best estimates. We get

$$
\frac{\nabla\mu}{\mu}=\sum_{j=1}^{n}\frac{\nabla\mu_j}{\mu_j}=
-2\delta\sum_{j=1}^{n} (x-a_j).
$$ 
So, taking in mind the left-hand-side in $H_2)$, 

\begin{equation*}
-\beta\sum_{i=1}^{n}\frac{(x-a_i)}{|x-a_i|^2}\cdot\frac{\nabla \mu}{\mu}= 
2\beta\delta\sum_{i,j=1}^{n}
\frac{(x-a_i)\cdot(x-a_j)}{|x-a_i|^2}. 
\end{equation*}

\noindent We estimate the scalar product. In $B(a_k, r_0)$,  for any $k\in\{1, \dots n\}$, we get


\begin{equation}\label{ineq Bk}
\begin{split}
2\beta\delta\sum_{i=1}^{n}&\frac{(x-a_i)}{|x-a_i|^2}\cdot\frac{\nabla \mu}{\mu}=
2\beta\delta\frac{(x-a_k)\cdot(x-a_k)}{|x-a_k|^2}
\\&+
2\beta\delta\sum_{\substack{i\ne k\\j=i}}^{n}\frac{(x-a_i)\cdot(x-a_i)}{|x-a_i|^2}+
2\beta\delta\sum_{j\ne k}\frac{(x-a_k)\cdot(x-a_j)}{|x-a_k|^2}
\\&+
2\beta\delta \sum_{\substack{i\ne k\\j\ne i}}^{n}
\frac{(x-a_i)\cdot(x-a_j)}{|x-a_i|^2}=
J_1+J_2+J_3+J_4.
\end{split}
\end{equation}
So
$$
J_1+J_2=2\beta\delta n.
$$
Since
$$
(x-a_k)\cdot(x-a_j)=\frac{1}{2}\left(
|x-a_k|^2+|x-a_j|^2-|a_k-a_j|^2\right),
$$
$J_3$ and $J_4$ can be estimated as follows.

\begin{equation*}
J_3= \beta\delta \sum_{j\ne k}
\left(1+\frac{|x-a_j|^2-|a_k-a_j|^2}{|x-a_k|^2}\right)
\le\beta\delta \sum_{j\ne k} \left[1+
\frac{(r_0+|a_k-a_j|)^2-|a_k-a_j|^2}{|x-a_k|^2}\right]
\end{equation*}
and
$$
J_4=\beta\delta \sum_{\substack{i\ne k\\j\ne i}}^{n} 
\left(1+\frac{|x-a_j|^2-|a_i-a_j|^2}{|x-a_i|^2}\right)
\le \beta\delta \sum_{\substack{i\ne k\\j\ne i}}^{n} \left[1+
\frac{(r_0+|a_k-a_j|)^2-|a_i-a_j|^2}{(|a_k-a_i|-r_0)^2}\right].
$$
Then for $C_\mu$ large enough and $K_{\mu,r_0}=
\delta\sum_{j\ne k} (r_0^2+2r_0|a_k-a_j|)$
in $B(a_k, r_0)$ the condition $H_2)$ holds. For $x\in\R^N\setminus 
\bigcup_{k=1}^n B(a_k, r_0)$ we obtain

$$
\frac{(x-a_i)\cdot(x-a_j)}{|x-a_i|^2}\le \frac{|x-a_j|}{|x-a_i|}\le \hbox{const}.
$$
In fact, if $|x|>2\max_i |a_i|$,
$$
\frac{|x|}{2}\le |x|-|a_i| \le|x-a_i|\le |x|+|a_i|\le \frac{3}{2}|x|
$$
for any $i$,
so for $|x|$ large enough we get $|x-a_i|\sim |x|$.
Instead if $|x|\le R=2\max_i |a_i|$,
$$
r_0\le|x-a_i|\le |x|+|a_i|\le \frac{3}{2}R
$$
for any $i$. 

For other examples see \cite{CPT2}.

\bigskip

\section{A preliminary estimate}
\label{sec:3}

The next result was stated in \cite{CPT2} (see also \cite{CA multipolar Gold}). We give 
a riformulated version that is functional to our purposes. 
The estimate represents a preliminary weighted Hardy inequality.

\medskip

\begin{thm}\label{wHi with beta}
Let $N\ge 3$ and $n\ge 2$.
Under hypotheses  $H_1)$ and $H_2)$ we get
\begin{equation}\label{ineq beta}
\begin{split}
\int_{\R^N}\sum_{i=1}^{n}&\frac{\beta(N+K_\mu-2)-n\beta^2}{|x-a_i|^2}\varphi^2\,d\mu
\\&+
\frac{\beta^2}{2}\int_{\R^N}\sum_{\substack{i,j=1\\ i\ne j}}^{n}
\frac{|a_i-a_j|^2}{|x-a_i|^2|x-a_j|^2}\varphi^2 \, d\mu
\\&\le 
\int_{\R^N}|\nabla\varphi|^2 \, d\mu+C_\mu\int_{\R^N}\varphi^2 \, d\mu
\end{split}
\end{equation}
for any $ \varphi \in H_\mu^1$.
As a consequence the following inequality holds
\begin{equation}\label{preliminary wHi} 
\begin{split}
c_{N,n,\mu}\int_{\R^N}\sum_{i=1}^{n}&\frac{\varphi^2}{|x-a_i|^2}\,d\mu
\\&+
\frac{c_{N,n,\mu}}{2n}\int_{\R^N}\sum_{\substack{i,j=1\\ i\ne j}}^{n}
\frac{|a_i-a_j|^2}{|x-a_i|^2|x-a_j|^2}\varphi^2 \, d\mu
\\&\le
 \int_{\R^N}|\nabla\varphi|^2 \, d\mu+C_\mu\int_{\R^N}\varphi^2 \, d\mu,
\end{split}
\end{equation}
where $c_{N,n,\mu}=\frac{(N+K_\mu-2)^2}{4n}$ is
the maximum value of the first constant on left-hand side in (\ref{ineq beta}) attained for $\beta=\frac{N+K_\mu-2}{2 n}$.
\end{thm}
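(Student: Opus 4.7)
The strategy is the weighted vector field method of \cite{M}, extended to the multipolar weighted setting in \cite{CPT2}, applied to the field $F=G\mu$ with $G(x)=\beta\sum_{i=1}^{n}(x-a_i)/|x-a_i|^2$. For $\varphi\in C_c^\infty(\R^N)$ the identity $\int\mathrm{div}(F)\,\varphi^2\,dx=-2\int\varphi\,F\cdot\nabla\varphi\,dx$ holds (justified by the local integrability of $F_j$ and $\partial F_j/\partial x_j$ noted in Section \ref{sec:2}), and combined with the weighted Young inequality $2|\varphi F\cdot\nabla\varphi|\le |G|^2\varphi^2\mu+|\nabla\varphi|^2\mu$ it yields the master estimate
$$\int_{\R^N}\bigl[\mathrm{div}(G\mu)-|G|^2\mu\bigr]\varphi^2\,dx\;\le\;\int_{\R^N}|\nabla\varphi|^2\,\mu\,dx.$$
Once a pointwise lower bound on the integrand is in hand, the density of $C_c^\infty(\R^N)$ in $H^1_\mu$ from hypothesis $H_1$ extends the conclusion to every $\varphi\in H^1_\mu$.

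The heart of the proof is the explicit computation of $\mathrm{div}(G\mu)-|G|^2\mu$. Writing $\mathrm{div}(G\mu)=\mu\,\mathrm{div}(G)+G\cdot\nabla\mu$, the formula $\mathrm{div}\bigl((x-a)/|x-a|^2\bigr)=(N-2)/|x-a|^2$ gives $\mathrm{div}(G)=\beta(N-2)\sum_i|x-a_i|^{-2}$, and the drift $G\cdot\nabla\mu$ is controlled from below by hypothesis $H_2$, so that the combined coefficient in front of $\sum_i|x-a_i|^{-2}$ becomes $\beta(N+K_\mu-2)$, modulo an additive term $-C_\mu\mu$. For the squared norm,
$$|G|^2=\beta^2\sum_{i,j=1}^{n}\frac{(x-a_i)\cdot(x-a_j)}{|x-a_i|^2|x-a_j|^2},$$
the diagonal $i=j$ contributes $\beta^2\sum_i|x-a_i|^{-2}$, while the polarization identity $2(x-a_i)\cdot(x-a_j)=|x-a_i|^2+|x-a_j|^2-|a_i-a_j|^2$ splits the off-diagonal part into $\beta^2(n-1)\sum_i|x-a_i|^{-2}$ together with the mutual interaction piece $-\tfrac{\beta^2}{2}\sum_{i\neq j}|a_i-a_j|^2/(|x-a_i|^2|x-a_j|^2)$. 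Collecting, $|G|^2=n\beta^2\sum_i|x-a_i|^{-2}-\tfrac{\beta^2}{2}\sum_{i\neq j}|a_i-a_j|^2/(|x-a_i|^2|x-a_j|^2)$, so its negative interaction piece migrates to the left-hand side of the master estimate with the favorable sign $+\beta^2/2$, producing precisely (\ref{ineq beta}).

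Inequality (\ref{preliminary wHi}) then follows by optimization in $\beta$. The concave parabola $\beta\mapsto\beta(N+K_\mu-2)-n\beta^2$ attains its maximum $c_{N,n,\mu}=(N+K_\mu-2)^2/(4n)$ at $\beta_{*}=(N+K_\mu-2)/(2n)$, which is strictly positive thanks to the condition $K_\mu>2-N$. A direct substitution shows $\beta_{*}^{2}/2=c_{N,n,\mu}/(2n)$, so specializing (\ref{ineq beta}) to $\beta=\beta_{*}$ delivers exactly (\ref{preliminary wHi}). The main technical delicacy is not the algebra but the integration by parts for a vector field singular at each pole; this is precisely what the local integrability conditions built into $H_1$ are designed to guarantee, so no further truncation or limiting argument is required beyond the density step.
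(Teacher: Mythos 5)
Your proof is correct and takes essentially the same route as the paper: the paper defers the argument for this theorem to \cite{CPT2}, but the identical vector-field computation --- integration by parts for $F=G\mu$, Young's inequality, hypothesis $H_2)$, and the polarization identity (\ref{mixed term}) that isolates the interaction term $W$ --- appears verbatim in the proof of Theorem \ref{wHi}, and your optimization in $\beta$ reproduces the stated constants $c_{N,n,\mu}$ and $\beta=\frac{N+K_\mu-2}{2n}$. The one caveat, inherited from the paper rather than introduced by you, is that the inequality in $H_2)$ as literally written would yield the coefficient $\beta(N-K_\mu-2)$ instead of $\beta(N+K_\mu-2)$; you apply $H_2)$ with the same (evidently intended) sign convention that the paper itself uses in Section \ref{sec:4}.
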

The proof of the Theorem \ref{wHi with beta} in \cite {CPT2} is based on the vector field method extended to the multipolar case. 
In \cite{BDE} an estimate similar to (\ref{preliminary wHi}) was obtained in a different way when $\mu =1$.

We observe that inequality (\ref {preliminary wHi}) is an improved inequality with respect 
to the first example of multipolar inequality with weight
\begin{equation*} 
\frac{(N+K_\mu-2)^2}{4n}\int_{\R^N}\sum_{i=1}^{n}\frac{\varphi^2}{|x-a_i|^2}\,d\mu
\le \int_{\R^N}|\nabla\varphi|^2 \, d\mu+C_\mu\int_{\R^N}\varphi^2 \, d\mu,
\end{equation*}
which the natural generalization of the weighted Hardy inequality (see \cite{CPT1})
\begin{equation} \label{ineq unipolar}
\frac{(N+K_\mu-2)^2}{4}\int_{\R^N}\frac{\varphi^2}{|x|^2}\,d\mu
\le\int_{\R^N}|\nabla\varphi|^2 \, d\mu+C_\mu\int_{\R^N}\varphi^2 \, d\mu,
\end{equation}
Now we focus our attention on the second term on the left-hand side in (\ref{ineq beta}).  
For simplicity we put
\begin{equation}\label{def W}
W(x):=\frac{1}{2}\sum_{\substack{i,j=1\\ i\ne j}}^{n}
\frac{|a_i-a_j|^2}{|x-a_i|^2|x-a_j|^2}
\end{equation}


\noindent In $B(a_i, r_0)$, taking into account that
$$
W=\frac{1}{|x-a_i|^2}\sum_{j\ne i}\frac{|a_i-a_j|^2}{|x-a_j|^2}+
\sum_{\substack{k,j\ne i\\ j>k}}^{n}
\frac{|a_k-a_j|^2}{|x-a_k|^2|x-a_j|^2},
$$
we have the following estimates for $W$ from above and from below
\begin{equation*}
\begin{split}
W\le & \frac{1}{|x-a_i|^2}\sum_{j\ne i}\frac{(|a_i-a_j|^2}{(|a_i-a_j|-|x-a_i|)^2}
\\& +
\sum_{\substack{k,j\ne i\\ j>k}}^{n}
\frac{|a_k-a_j|^2}{(|a_i-a_k|-|x-a_i|)^2(|a_i-a_j|-|x-a_i|)^2}
\\& \le
\frac{n-1}{|x-a_i|^2}\frac{d^2}{(d-r_0)^2}+
\sum_{\substack{k,j\ne i\\ j>k}}^{n}
\frac{|a_k-a_j|^2}{(|a_i-a_k|-r_0)^2(|a_i-a_j|-r_0)^2}
\\&\le
\frac{n-1}{|x-a_i|^2}\frac{d^2}{(d-r_0)^2}+c_1
\end{split}
\end{equation*}
and 
\begin{equation}\label{W from below}
\begin{split}
W\ge & \frac{1}{|x-a_i|^2}\sum_{j\ne i}\frac{(|a_i-a_j|^2}{(|a_i-a_j|+|x-a_i|)^2}
\\& +
\sum_{\substack{k,j\ne i\\ j>k}}^{n}
\frac{|a_k-a_j|^2}{(|a_i-a_k|+|x-a_i|)^2(|a_i-a_j|+|x-a_i|)^2}
\\& \ge
\frac{n-1}{|x-a_i|^2}\frac{d^2}{(d+r_0)^2}+
\sum_{\substack{k,j\ne i\\ j>k}}^{n}
\frac{|a_k-a_j|^2}{(|a_i-a_k|+r_0)^2(|a_i-a_j|+r_0)^2}
\\&\ge
\frac{n-1}{|x-a_i|^2}\frac{d^2}{(d+r_0)^2}+c_2.
\end{split}
\end{equation}
\noindent When $x$ tends to $a_i$ we get 
\begin{equation}\label{behav W}
W\sim \frac{n-1}{|x-a_i|^2}
\end{equation}
and, then, taking in mind the inequality (\ref{ineq beta}), we have the asymptotic behaviour
\begin{equation}\label{behav W 2}
\begin{split}
\sum_{i=1}^{n}&\frac{\beta(N+K_\mu-2)-n\beta^2}{|x-a_i|^2}
+
\frac{\beta^2}{2}\sum_{\substack{i,j=1\\ i\ne j}}^{n}\frac{|a_i-a_j|^2}{|x-a_i|^2|x-a_j|^2}
\\&\sim
\left[\beta(N+K_\mu-2)-n\beta^2+\beta^2 (n-1)\right]
\frac{1}{|x-a_i|^2}
\\&=
\left[\beta(N+K_\mu-2)-\beta^2\right]\frac{1}{|x-a_i|^2}.
\end{split}
\end{equation}
The maximum value of the constant on the right-hand side in (\ref{behav W 2}) is 
the best constant in the weighted Hardy inequality with a single pole (see (\ref{ineq unipolar})).

\bigskip

\section{Weighted multipolar Hardy inequality}
\label{sec:4}

The behaviour of the function $W$ in (\ref{behav W}) when $x$ tends to the pole $a_i$ 
leads us to study the relation between the constant on the left-hand side in weighted Hardy 
inequalities and the closeness to the single pole.
The next result emphasizes this relation and improves a similar inequality stated 
in \cite{CPT2} in a different way. 

\medskip

\begin{thm}\label{wHi}
Assume that the conditions  $H_1)$ and $H_2)$ hold.  
Then for any $ \varphi \in H^1_\mu$ we get

\begin{equation}\label{ineq}
c\int_{{\mathbb R}^N}\sum_{i=1}^n \frac{\varphi^2 }{|x-a_i|^2}\, \mu(x)dx\le  
\int_{{\mathbb R}^N} |\nabla\varphi|^2 \, \mu(x)dx 
+k \int_{\R^N}\varphi^2 \, \mu(x)dx
\end{equation}
with $c\in\left]0,c_o(N+K_\mu)\right[$,
where $c_o(N+K_\mu)=\left(\frac{N+K_\mu-2}{2}\right)^2$ optimal constant, and
$k=k(n, d, \mu)$, $d:=\min_{\substack{1\le i,j\le n\\ i\ne j}} |a_i-a_j|/2$.
\end{thm}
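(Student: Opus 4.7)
The plan is to combine the preliminary bipolar estimate (\ref{ineq beta}) with the Ismagilov--Morgan--Simon (IMS) localisation technique. The decisive input is the asymptotic (\ref{behav W 2}): near each pole $a_i$ the sum of the two singular terms on the left of (\ref{ineq beta}) behaves as $[\beta(N+K_\mu-2)-\beta^2]/|x-a_i|^2$, whose maximum over $\beta$ equals the single-pole optimal constant $c_o(N+K_\mu)=((N+K_\mu-2)/2)^2$. Localising $\varphi$ in small balls around each pole should thus allow us to approach $c_o(N+K_\mu)$ on the left of (\ref{ineq}), paying for this improvement only through a larger constant $k$.

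\medskip

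Fix $r_0,\delta>0$ with $r_0+\delta<d$, so that the balls $B(a_i,r_0+\delta)$ are pairwise disjoint, and choose a smooth partition of unity $\phi_0,\phi_1,\dots,\phi_n$ with $\sum_{i=0}^n\phi_i^2=1$, $\phi_i$ supported in $B(a_i,r_0+\delta)$ and equal to $1$ on $B(a_i,r_0)$ for $i\ge 1$, and $\phi_0$ vanishing on $\bigcup_{i=1}^n B(a_i,r_0)$. Using $\sum_i\phi_i\nabla\phi_i=0$ one obtains the IMS identity
\[
|\nabla\varphi|^2=\sum_{i=0}^n|\nabla(\phi_i\varphi)|^2-V_\phi\,\varphi^2,\qquad V_\phi:=\sum_{i=0}^n|\nabla\phi_i|^2\in L^\infty,
\]
with $\|V_\phi\|_\infty=O(\delta^{-2})$, which decouples the gradient of $\varphi$ into the pieces $u_i:=\phi_i\varphi$ localised either near a single pole or away from all poles.

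\medskip

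For each $i\ge 1$ apply (\ref{ineq beta}) to $u_i$. On $\supp u_i\subset B(a_i,r_0+\delta)$ the terms $1/|x-a_j|^2$ with $j\ne i$ are bounded by $(2d-r_0-\delta)^{-2}$, and by (\ref{W from below}) we have $W\ge(n-1)\eta^2/|x-a_i|^2$ with $\eta:=d/(d+r_0+\delta)$. Separating the singular $j=i$ contribution and absorbing the bounded cross terms into a multiple of $\int u_i^2\,d\mu$, the left-hand side of (\ref{ineq beta}) applied to $u_i$ dominates
\[
\bigl[\beta(N+K_\mu-2)-n\beta^2+\beta^2(n-1)\eta^2\bigr]\int_{\R^N}\frac{u_i^2}{|x-a_i|^2}\,d\mu-C_1\int_{\R^N}u_i^2\,d\mu,
\]
with $C_1=C_1(n,d,r_0,\delta,\beta,\mu)$. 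Maximising the bracket in $\beta$ yields the value $(N+K_\mu-2)^2/\bigl(4[n-(n-1)\eta^2]\bigr)$, attained at $\beta^\star=(N+K_\mu-2)/(2[n-(n-1)\eta^2])$, and this maximum tends to $c_o(N+K_\mu)$ as $\eta\to 1^-$. Given $c\in(0,c_o(N+K_\mu))$, choose $r_0$ and $\delta$ so small that the maximum exceeds $c$; then
\[
c\int_{\R^N}\frac{u_i^2}{|x-a_i|^2}\,d\mu\le\int_{\R^N}|\nabla u_i|^2\,d\mu+C_2\int_{\R^N}u_i^2\,d\mu.
\]

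\medskip

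To assemble (\ref{ineq}), decompose $c\int\sum_j\varphi^2/|x-a_j|^2\,d\mu=\sum_{i=0}^n c\int\sum_j u_i^2/|x-a_j|^2\,d\mu$. On $\supp u_0$ all $1/|x-a_j|^2\le r_0^{-2}$, so the $i=0$ piece is trivially bounded by $n c\,r_0^{-2}\int u_0^2\,d\mu$. For $i\ge 1$ the cross terms $j\ne i$ on $\supp u_i$ are again bounded, and the $j=i$ piece is controlled by the local estimate above. Summing over $i$, applying the IMS identity to replace $\sum_i\int|\nabla u_i|^2\,d\mu$ by $\int|\nabla\varphi|^2\,d\mu+\int V_\phi\,\varphi^2\,d\mu$, and using $\sum_i u_i^2=\varphi^2$, we arrive at (\ref{ineq}) with $k=k(n,d,\mu)$ collecting $C_\mu$, $C_2$, the cross-term constants, and $\|V_\phi\|_\infty$. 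The main obstacle is precisely this trade-off: pushing $c$ towards $c_o(N+K_\mu)$ forces $\eta\to 1$, hence $r_0+\delta\to 0$, which blows up $\|V_\phi\|_\infty\sim\delta^{-2}$; this is why the IMS scheme cannot reach the optimal constant with a finite $k$, and why (\ref{ineq}) is stated strictly below $c_o(N+K_\mu)$.
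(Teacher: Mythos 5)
Your argument is correct, but it takes a genuinely different route from the paper's. You run the IMS localisation scheme on top of the preliminary inequality (\ref{ineq beta}): a quadratic partition of unity $\sum_i\phi_i^2=1$, the identity $\sum_i\int|\nabla(\phi_i\varphi)|^2\,d\mu=\int|\nabla\varphi|^2\,d\mu+\int\sum_i|\nabla\phi_i|^2\,\varphi^2\,d\mu$, and then a single-pole analysis on each localised piece $u_i=\phi_i\varphi$. That is essentially the strategy of \cite{BDE} and \cite{CPT2}, which this paper explicitly sets out to replace (``improves a similar inequality stated in \cite{CPT2} in a different way''). The paper instead stays entirely inside the vector field method: starting from $\int\varphi^2\,\mathrm{div}\,F\,dx$ with $F=\beta\sum_i\frac{x-a_i}{|x-a_i|^2}\mu$, it isolates the mixed term $\sum_{i\ne j}\frac{(x-a_i)\cdot(x-a_j)}{|x-a_i|^2|x-a_j|^2}$ and splits its integral over the small balls $B(a_k,\varepsilon)$, the annuli $B(a_k,\varepsilon+\delta)\setminus B(a_k,\varepsilon)$ and the exterior, invoking the identity (\ref{mixed term}) and the lower bound (\ref{W from below}) for $W$ only inside the small balls; no partition of unity and no localisation error $\|V_\phi\|_\infty$ ever appear. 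The two routes yield the same singular coefficient --- your $n-(n-1)\eta^2$ is exactly the paper's $1+(n-1)c_\varepsilon$ --- and in both the constant $k$ blows up as $c\to c_o(N+K_\mu)^-$. What the paper's version buys is an explicit and (the author claims) better additive constant $k=c_4+C_\mu$; your $k$ additionally carries the $O(\delta^{-2})$ penalty from $V_\phi$ and the constants from $n+1$ separate applications of (\ref{ineq beta}). One point you should make explicit: the coefficient $\beta(N+K_\mu-2)-n\beta^2$ multiplying the $j\ne i$ terms of (\ref{ineq beta}) is negative at $\beta=\beta^\star$ once $n\ge 3$, so ``absorbing the bounded cross terms'' really means moving them to the right-hand side via $|x-a_j|\ge 2d-r_0-\delta$ on $\supp u_i$; as written this is implied but not spelled out.
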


\begin{proof}
By density, it is enough to prove (\ref{ineq}) for any
$\varphi \in C_{c}^{\infty}(\R^N)$. 

The optimality of the constant $c_o(N+K_\mu)$ was stated in \cite{CPT2}. 
We will prove the inequality (\ref{ineq}).

We start from the integral
\begin{equation}\label{def div F}
\int_{\R^N}\varphi^2 {\rm div}F \,dx=
\beta\int_{\R^N}\sum_{i=1}^n\left[\frac{N-2}{|x-a_i|^2} \mu(x) +
\frac{(x-a_i)}{|x-a_i|^2}\cdot\nabla\mu\right]\varphi^2 dx
\end{equation} 
and integrate by parts getting, through H{\"o}lder's and Young's inequalities, 
the first following inequality

\begin{equation}\label{div F}
\begin{split}
\int_{\R^N}&\varphi^2 {\rm div}F \,dx=
-2\int_{\R^N}^{}\varphi F\cdot\nabla\varphi \, dx
\\&
\le 2\left(\int_{\R^N}|\nabla \varphi|^2 \, \mu(x)dx\right)^{\frac{1}{2}}
\left[\int_{\R^N}\left| \sum_{i=1}^{n} \frac{ \beta\,(x-a_i)}{|x-a_i|^2} \right|^2 
\,\varphi^2\, \mu(x)dx\right]^{\frac{1}{2}}
\\&
\le \int_{\R^N}|\nabla \varphi|^2 \, \mu(x)dx+
\int_{\R^N}\left| \sum_{i=1}^{n} \frac{\beta\,(x-a_i)}{|x-a_i|^2} \right|^2 \,\varphi^2\, \mu(x)dx.
\end{split}
\end{equation}
So from (\ref{def div F}), using the estimate (\ref{div F}), we get 
\begin{equation}\label{first part}
\begin{split}
\int_{\R^N}\sum_{i=1}^n \frac{\beta(N-2)}{|x-a_i|^2}\varphi^2&\mu(x)dx
\le  \int_{\R^N}|\nabla \varphi|^2 \, \mu(x)dx
\\&+
\int_{\R^N} \sum_{i=1}^{n} \frac{\beta^2}{|x-a_i|^2}  \,\varphi^2\, \mu(x)dx
\\&
+\int_{\R^N} \sum_{\substack{i,j=1\\j\ne i}}^n\frac{ \beta^2\,(x-a_i)
\cdot (x-a_j)}{|x-a_i|^2|x-a_j|^2} \,\varphi^2\, \mu(x)dx
\\&
-\beta
\int_{\R^N}\sum_{i=1}^n\frac{(x-a_i)}{|x-a_i|^2}\cdot\nabla\mu\,\varphi^2 dx.
\end{split}
\end{equation}
Let $\varepsilon>0$ small enough and $\delta>0$ such that 
$\varepsilon+\delta<\frac{d}{2}$. The next step is to estimate the integral of the mixed term that comes out 
the square of the sum in (\ref{first part}) by writing 

\begin{equation}\label{integr mixed term}
\begin{split}
\int_{\R^N}& \sum_{\substack{i,j=1\\j\ne i}}^n
\frac{\beta^2\,(x-a_i)\cdot (x-a_j)}{|x-a_i|^2|x-a_j|^2} \,\varphi^2\, \mu(x)dx
\\&=
\int_{\Uk}\sum_{\substack{i,j=1\\j\ne i}}^n
\frac{\beta^2\,(x-a_i)\cdot (x-a_j)}{|x-a_i|^2|x-a_j|^2} \,\varphi^2\, \mu(x)dx
\\&+
\int_{\Ude\setminus \Uk}\sum_{\substack{i,j=1\\j\ne i}}^n
\frac{\beta^2\,(x-a_i)\cdot (x-a_j)}{|x-a_i|^2|x-a_j|^2} \,\varphi^2\, \mu(x)dx
\\&+
\int_{\R^N\setminus \Ude}\sum_{\substack{i,j=1\\j\ne i}}^n
\frac{\beta^2\,(x-a_i)\cdot (x-a_j)}{|x-a_i|^2|x-a_j|^2} \,\varphi^2\, \mu(x)dx
\\&:=
I_1+I_2+I_3,
\end{split}
\end{equation}
Subsequently we will rewrite the mixed term in the following way.
\begin{equation}\label{mixed term}
\begin{split}
\sum_{\substack{i,j=1\\ i\ne j}}^{n}&\frac{(x-a_i)\cdot (x-a_j)}{|x-a_i|^2|x-a_j|^2}  =
\sum_{\substack{i,j=1\\ i\ne j}}^{n}\frac{|x|^2-x\cdot a_i-x\cdot a_j+a_i\cdot a_j}
{|x-a_i|^2|x-a_j|^2}
\\&=
\sum_{\substack{i,j=1\\ i\ne j}}^{n}\frac{\frac{|x-a_i|^2}{2}+\frac{|x-a_j|^2}{2}-\frac{|a_i-a_j|^2}{2}}
{|x-a_i|^2|x-a_j|^2}
\\&=
\sum_{\substack{i,j=1\\ i\ne j}}^{n}\frac{1}{2}\left( \frac{1}{|x-a_i|^2}+\frac{1}{|x-a_j|^2}
-\frac{|a_i-a_j|^2}{|x-a_i|^2|x-a_j|^2}\right) 
\\&=
(n-1)\sum_{i=1}^{n}\frac{1}{|x-a_i|^2}
-\frac{1}{2}\sum_{\substack{i,j=1\\ i\ne j}}^{n}\frac{|a_i-a_j|^2}{|x-a_i|^2|x-a_j|^2}
\\&=
(n-1)\sum_{i=1}^{n}\frac{1}{|x-a_i|^2}-W.
\end{split}
\end{equation} 
To estimate the integral $I_1$ in (\ref{integr mixed term}) we use the estimate 
(\ref{W from below}) in Section \ref{sec:3} 
for $W$ in a ball centered in $a_k$ and the identity (\ref{mixed term}).  We obtain

\begin{equation*}
\begin{split}
I_1 & \le 
\beta^2\sum_{k=1}^{n} 
\int_{B(a_k,\varepsilon)}\Biggl[
\sum_{i=1}^{n}\frac{n-1}{|x-a_i|^2}-
\frac{n-1}{|x-a_k|^2}\frac{d^2}{(d+\varepsilon)^2}
\\&-
\sum_{\substack{i,j\ne k\\j>i}}\frac{|a_i-a_j|^2}
{(|a_k-a_i|+\varepsilon)^2(|a_k-a_j|+\varepsilon)^2}\Biggr]\,\varphi^2\, \mu(x)dx
\\&=
\beta^2\sum_{k=1}^{n} 
\int_{B(a_k,\varepsilon)}\Biggl\{
\frac{n-1}{|x-a_k|^2}\left[1-\frac{d^2}{(d+\varepsilon)^2}\right]+
\sum_{\substack{i=1\\ i\ne k}}^n \frac{n-1}{|x-a_i|^2}
\\&-
\sum_{\substack{i,j\ne k\\j>i}}\frac{|a_i-a_j|^2}
{(|a_k-a_i|+\varepsilon)^2(|a_k-a_j|+\varepsilon)^2}\Biggr\}\,\varphi^2\, \mu(x)dx.
\end{split}
\end{equation*}
To complete the estimate of $I_1$ we observe that in $B(a_k,\varepsilon)$, for $i\ne k$, it occurs
$$
|x-a_i|\ge |a_k-a_i|-|x-a_k|\ge |a_k-a_i|-\varepsilon
$$
so we get
$$
\sum_{\substack{i=1\\ i\ne k}}^n \frac{n-1}{|x-a_i|^2}\le
\sum_{\substack{i=1\\ i\ne k}}^n \frac{n-1}{(|a_k-a_i|-\varepsilon)^2}.
$$
Then
$$
I_1\le \beta^2\sum_{k=1}^{n} 
\int_{B(a_k,\varepsilon)}\left\{
\frac{n-1}{|x-a_k|^2}\left[1-\frac{d^2}{(d+\varepsilon)^2}\right]+c_3\right\}
\,\varphi^2\,\mu(x)dx,
$$
where
$$
c_3=\sum_{\substack{i=1\\ i\ne k}}^n \frac{n-1}{(|a_k-a_i|-\varepsilon)^2}-
\sum_{\substack{i,j\ne k\\j>i}}\frac{|a_i-a_j|^2}
{(|a_k-a_i|+\varepsilon)^2(|a_k-a_j|+\varepsilon)^2}.
$$
For the second integral $I_2$ we observe that in $B(a_k,\varepsilon+\delta)\setminus
B(a_k,\varepsilon)$, for $j\ne k$, $|x-a_k|>\varepsilon$ and
$$
|x-a_j|\ge |a_k-a_j|-|x-a_k|\ge |a_k-a_j|-(\varepsilon+\delta)
$$
Therefore
\begin{equation*}
\begin{split}
I_2\le &\int_{\Ude\setminus \Uk}\sum_{\substack{i,j=1\\j\ne i}}^n
\frac{\beta^2}{|x-a_i||x-a_j|} \,\varphi^2\, \mu(x)dx
\\&\le
\frac{ n\beta^2}{\varepsilon}\sum_{\substack{j=1\\j\ne k}}^n
\frac{1}{|a_k-a_j|-(\varepsilon+\delta)}\int_{\Ude\setminus \Uk}
\varphi^2\, \mu(x)dx.
\end{split}
\end{equation*}
The remaining integral $I_3$ can be can be estimated as follows.
\begin{equation*}
\begin{split}
I_3\le &\int_{\R^N\setminus\Ude}\sum_{\substack{i,j=1\\j\ne i}}^n
\frac{\beta^2}{|x-a_i||x-a_j|} \,\varphi^2\, \mu(x)dx
\\&\le
\frac{ n(n-1)\beta^2}{(\varepsilon+\delta)^2}\int_{\R^N\setminus\Ude}
\varphi^2\, \mu(x)dx.
\end{split}
\end{equation*}
Starting from (\ref{integr mixed term}) and using the estimates 
obtained for $I_1$, $I_2$ and $I_3$, we get for $\varepsilon$ small enough,
\begin{equation}\label{ineq mixed term}
\begin{split}
\int_{\R^N}& \sum_{\substack{i,j=1\\j\ne i}}^n
\frac{\beta^2\,(x-a_i)\cdot (x-a_j)}{|x-a_i|^2|x-a_j|^2} \,\varphi^2\, \mu(x)dx
\\&\le
\int_{\R^N}
\sum_{i=1}^{n} \frac{\beta^2(n-1)c_\varepsilon}{|x-a_i|^2}\,\varphi^2\,\mu(x)dx
+c_4 \int_{\R^N}\,\varphi^2\,\mu(x)dx,
\end{split}
\end{equation}
where
$$
c_\varepsilon=1-\frac{d^2}{(d+\varepsilon)^2}
\qquad \hbox{and}\qquad
c_4=\frac{ n\beta^2}{\varepsilon}\sum_{\substack{j=1\\j\ne k}}^n
\frac{1}{|a_k-a_j|-(\varepsilon+\delta)}.
$$
Going back to (\ref{first part}), by (\ref{ineq mixed term})
and by the hypothesis $H_2)$, we deduce that

\begin{equation*}
\begin{split}
\int_{\R^N}\sum_{i=1}^{n}&
\frac{\beta(N+K_\mu-2)  -\beta^2\left[1+(n-1)c_\varepsilon\right] }{|x-a_i|^2}\varphi^2 \, \mu(x)dx
\\&\le
\int_{\R^N}^{}|\nabla \varphi|^2 \mu(x)dx 
+\left( c_4 +C_\mu\right)
\int_{\R^N} \varphi^2\, \mu(x)dx.
\end{split}
\end{equation*}

The maximum of the function $\beta\mapsto c=(N+K_\mu-2)\beta-\beta^2
\left[1+(n-1)c_\varepsilon\right]$, fixed $\varepsilon$, 
is $c_{max}(N+K_\mu)=\frac{(N+K_\mu-2)^2}{4[1+(n-1)c_\varepsilon]}$
attained in $\beta_{max}=\frac{N+K_\mu-2}{2[1+(n-1)c_\varepsilon]}$.

\end{proof}

We conclude with some remarks.
If $\varepsilon$ tends to zero, and then if we get close enough to the single pole,
the constant $c=\frac{(N+K_\mu-2)^2}{4[1+(n-1)c_\varepsilon]}$ tends to the optimal constant 
$c_o(N+K_\mu)$. The constant $k= c_4+C_\mu$ , $c_4$ with $\beta=\beta_{max}$, 
is better than the analogous constant in \cite{CPT2}.

In the case of Gaussian measure the constant $K_\mu$  tends to zero as the radius $\varepsilon$
of the sphere centered in a single pole tends to zero (cf. example in Section \ref{sec:2}). 

Finally, we observed that 
as a consequence of Theorem \ref{wHi}, we deduce the estimate 
$$
\| V^{\frac{1}{2}}\varphi\|_{L_\mu^2(\R^N)}\le c \|\varphi\|_{H^1_\mu(\R^N)},
$$
with $V=\sum_{i=1}^n \frac{1}{|x-a_i|^2}$  and 
$c$  a constant independent of $V$ and $\varphi$.

For $L^p$ estimates and embedding results of this type 
with some applications to elliptic equations see, for example, 
\cite{CAJIM2008,CAMIA,CAembedding}.


\bigskip\bigskip

\end{document}

\